\documentclass[12pt]{article}
\usepackage{authblk}

\usepackage{graphicx} 
\usepackage{amsmath}
\usepackage{amsfonts}
\usepackage{amssymb}
\usepackage{amsthm}
\usepackage{mathrsfs}
\usepackage{comment}
\usepackage{orcidlink}

\newcommand\blfootnote[1]{
    \begingroup
    \renewcommand\thefootnote{}\footnote{#1}
    \addtocounter{footnote}{-1}
    \endgroup
}

\newtheorem{thm}{Theorem}
\newtheorem{lemma}{Lemma}
\newtheorem{prop}{Proposition}
\newtheorem{corollary}{Corollary}

\newtheorem{hyp}{Hypothesis}

\title{Playing with additivity conditions in multiplicative functions }
\author{Crystel Bujold\, \orcidlink{0000-0002-7094-2568}}
\author{Isabelle Taylor-Daoust}
\affil{Collège Bois-de-Boulogne}
\date{}

\begin{document}

\maketitle
\begin{abstract}
    Let $f:\mathbb{Z}\to\mathbb{C}$ be a multiplicative function. Assume there exist integers $a>1$ and $d>1$ such that $(a,d)=1$ and let  $\mathcal{P}_{a,d}=\{a+kd:k\in\mathbb{Z}\}$. Under mild extra conditions on $d$ and $f(a)$, we prove that $f(n)=n\chi(n)$ for all $n$ outside an explicit exceptional set depending on $d$, and some Dirichlet character $\chi$. 
    
\noindent \textbf{Keywords: }Multiplicative functions;  Additivity; Arithmetic progressions; Dirichlet characters.
\end{abstract}

\section{Introduction}

\blfootnote{The first author is supported by FRQNT grant for college research.}
\blfootnote{\textbf{Subjects: }11A99, 11N99, 11B25, 11A25 }

We want to play a game. Knowing that one of the things that makes integers interesting is the dichotomy between their additive structure and their multiplicative structure, we wish to play by applying variations of these two conditions to arithmetic functions and observe the consequences. As it turns out, just like we expect it to be for the integers, satisfying both additivity and multiplicativity conditions is very restrictive and the functions must be very close to the identity function. 
    
Although the question has been approached from different points of view, previous research on the subject has mostly revolved around ideas similar to the ones put forth in Spiro's paper \cite{SPIRO1992232}, in which she shows that if $f$ is a multiplicative function and $f(p+q)=f(p)+f(q)$ for all primes $p$ and $q$, then $f$ must be the identity function. 

Several variations of the problem have been considered, mostly by modifying the sets on which the additive property holds, but also by considering different functional equations besides simple additivity and generalizing to the analogous k-terms problems. (see for example \cite{chen2010f}, \cite{chung1996multiplicative}, \cite{de1997new}, \cite{hasanalizade2021multiplicative}, \cite{indlekofer2006additive}, \cite{park2018multiplicative},\cite{park2020additive}, \cite{phong2006characterization}, \cite{park2023multiplicativefunctionsadditivepositive}, \cite{you2016characterization}).

In this paper, we consider the case of a multiplicative function $f$ endowed with the additive property $f(u+v) = f(u)+f(v)$ when $u$ and $v$ are in a given arithmetic progression. The goal of the game is to show that having both multiplicative and additive properties is very restrictive and to give a characterization of such functions. In fact, as shown in many of the papers cited above, the conditions imposed on the functions mean they have to behave closely to the identity function, but in our case, the additivity imposed on an arithmetic progression will produce a twist by a Dirichlet character. More precisely, using essentially elementary techniques, we show the following theorems:

\begin{thm}\label{thm 1}

Let $a>1$, $d>1$ be integers such that $d$ is odd and $(a,d)=1$.  Let $\mathcal{P}_{a,d}$ be an arithmetic progression of difference $d$ and let $f$ be a multiplicative function such that $f(a) \neq 0$. Then we have that
    
    \begin{equation} \label{split add}
    f(u+v)=f(u)+f(v) \hspace{1cm}  \text{for all   }  u,v \in \mathcal{P}_{a,d}%
    \end{equation}
    if and only if for all $n$ such that $(n,d)=1$,
    
    \begin{equation} \label{def f odd}
    f(n) = n\chi_d(n)     
    \end{equation}
    for some Dirichlet character modulo $d$ such that $\chi_d(2)=1$.\\
 
   \noindent For $n$ such that $(n,d) \neq 1$, then $f(n)$ takes arbitrary values subject to multiplicativity of $f$.

\end{thm}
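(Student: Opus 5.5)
The plan is to reduce the additive condition to an affine law on $\mathcal{P}_{a,d}$ and then use multiplicativity to force the slope to be the identity. I begin with the easy direction. Suppose $f(n)=n\chi_d(n)$ for $(n,d)=1$ with $\chi_d(2)=1$. Every $u\in\mathcal{P}_{a,d}$ satisfies $u\equiv a \pmod d$, so $(u,d)=1$. Since $d$ is odd, $u+v\equiv 2a \pmod d$ is also coprime to $d$. Hence
$$f(u+v)=(u+v)\chi_d(2a)=(u+v)\chi_d(a)=u\chi_d(u)+v\chi_d(v)=f(u)+f(v).$$
Values of $f$ at integers not coprime to $d$ never enter this computation, which explains the final sentence of the theorem.

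For the converse, the first step is to show that $f$ is affine on $\mathcal{P}_{a,d}$ and on $\mathcal{P}_{2a,d}$. If $u+v=u'+v'$ with all four in $\mathcal{P}_{a,d}$, then (\ref{split add}) gives $f(u)+f(v)=f(u')+f(v')$. Taking $u'=u+d$ and $v'=v-d$ shows that $f(a+(k+1)d)-f(a+kd)$ is independent of $k$. So there are constants $\alpha,\beta$ with $f(n)=\alpha n+\beta$ for all $n\in\mathcal{P}_{a,d}$. Because every element of $\mathcal{P}_{2a,d}$ is a sum $u+v$ with $u,v\in\mathcal{P}_{a,d}$, we then get $f(m)=\alpha m+2\beta$ on $\mathcal{P}_{2a,d}$.

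The second step uses multiplicativity along multipliers $r\equiv 1\pmod d$. For such $r$ and $n\in\mathcal{P}_{a,d}$ we have $rn\in\mathcal{P}_{a,d}$. Letting $n$ run over the infinitely many elements coprime to $r$, the identity $\alpha rn+\beta=f(r)(\alpha n+\beta)$ gives $\alpha r=\alpha f(r)$ and $\beta=f(r)\beta$.

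1. \emph{Case $\alpha\neq0$.} Then $f(r)=r$, and taking $r=d+1$ gives $\beta=0$. So $f(n)=\alpha n$ on $\mathcal{P}_{a,d}$.
2. \emph{Doubling.} Take $n\in\mathcal{P}_{a,d}$ odd; these exist since $d$ is odd. Comparing $f(2n)=2\alpha n$ with $f(2)f(n)=\alpha n f(2)$ yields $f(2)=2$.
3. \emph{Extending to all residues.} Define $\chi(n)=f(n)/n$ for $(n,d)=1$. For a residue class $c$ coprime to $d$, pick $m\equiv c$ and elements of $\mathcal{P}_{a,d}$ coprime to $m$, and compare $f(mn)$ with $f(m)f(n)$. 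Here $mn$ lies in $\mathcal{P}_{ac,d}$, so I need to show $f(x)/x$ is constant on each class. I would do this by writing a general $x$ coprime to $d$ as $x\cdot n=y\cdot n'$ with $n,n'\in\mathcal{P}_{a,d}$ and all factors pairwise coprime, using Dirichlet or CRT choices. This makes $\chi$ a completely multiplicative function on $(\mathbb{Z}/d\mathbb{Z})^*$, hence a Dirichlet character.
4. \emph{Normalisation.} We have $\alpha=\chi(a)$, and $f(2)=2$ gives $\chi(2)=1$.

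The main obstacle is the degenerate case $\alpha=0$. There $f\equiv\beta=f(a)\neq0$ on $\mathcal{P}_{a,d}$, $f\equiv 2\beta$ on $\mathcal{P}_{2a,d}$, and $f(r)=1$ for $r\equiv1\pmod d$ (coprime to suitable elements of the progression). The same comparison with odd $n$ gives $f(2)=2$. I would rule this case out with the coprime-factorisation trick of item 3. It shows that on odd integers coprime to $d$, $f$ depends only on the residue class and induces a homomorphism $(\mathbb{Z}/d\mathbb{Z})^*\to\mathbb{C}^*$, whose values must be roots of unity. Then $f(2m)=2\beta$ for odd $m\in\mathcal{P}_{a,d}$ forces $f(2)=2$, while $2m\equiv 2a$ and $2\cdot(d+1)/2$-type factorisations force $f(2)$ to be a root of unity, a contradiction. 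Making these coprimality choices rigorous, and handling the powers of $2$ (where $f$ need not be completely multiplicative), is where I expect the real work to lie.
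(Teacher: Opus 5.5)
Your argument is correct. Its first half matches the paper. Your shift $u\mapsto u+d$, $v\mapsto v-d$ is a quicker route to the paper's affine law $f(a+kd)=kf(a+d)-(k-1)f(a)$. Comparing coefficients in $\alpha rn+\beta=f(r)(\alpha n+\beta)$ is the paper's subtraction trick, giving $f(1+kd)=1+kd$ or $f(1+kd)=1$.

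You genuinely differ in how $\chi$ is built and in how the degenerate case $f(a)=f(a+d)$ (your $\alpha=0$) is excluded.
\begin{itemize}
\item \emph{The paper.} It constructs $\chi$ from Dirichlet primes $p$ with $pn\equiv 1\pmod d$. It rules out the degenerate case by a separate $2$-adic induction: from $f(1+kd)=1$, $f(2+kd)=2$ and factorisations $1+kd=2^j(1+2m)$ it gets $f(2^j)=2^j$ for every $j$. Then $2^\gamma\equiv 1\pmod d$ forces $2^\gamma=1$.
\item \emph{You.} You use a single device for both cases. For $x\equiv y\pmod d$ with $(x,y)=1$, choose $t$ with $tx\equiv a\pmod d$ and $(t,xy)=1$. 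Then $xt,yt\in\mathcal{P}_{a,d}$ and $f(x)f(yt)=f(xyt)=f(y)f(xt)$.
\item When $\alpha\neq 0=\beta$, this makes $f(n)/n$ periodic.
\item When $\alpha=0$, it makes $f$ itself periodic on integers prime to $d$. So $f$ is a homomorphism $(\mathbb{Z}/d\mathbb{Z})^*\to\mathbb{C}^*$ with root-of-unity values. That contradicts $f(2)=2$ precisely because $2$ is a unit modulo $d$.
\end{itemize}
Your route is more uniform, needs only the Chinese remainder theorem instead of Dirichlet's theorem, and pinpoints where oddness of $d$ enters. The paper's route avoids constructing any character in the degenerate case.

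Two small fixes.
\begin{itemize}
\item In $x\cdot n=y\cdot n'$ the factors cannot all be pairwise coprime. You are forced to take $n=yt$, $n'=xt$, and only $(x,n)=(y,n')=1$ is needed; chain through an auxiliary $z$ when $(x,y)\neq 1$.
\item The trick applies to every integer prime to $d$, including $2$ itself (take $y=2+d$). This gives $f(2)=f(2+d)$ directly, so you need neither the restriction to odd integers nor any control of $f(2^j)$.
\end{itemize}
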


The case of progressions with $d$ even is a little more subtle and we need to distinguish between the case $f(a) \neq f(a+d)$ and the case $f(a) = f(a+d)$.

\begin{thm}\label{thm even}
Let $a>1$, $d>1$ be  positive integers such that $d$ is even and $(a,d)=1$. Let  $\mathcal{P}_{a,d}=\{a+kd:k\in\mathbb{Z}\}$ be the arithmetic progression of $a$ modulo $d$ and let $f$ be a multiplicative function such that $f(a) \neq 0$. Then

      \begin{equation*}
    f(u+v)=f(u)+f(v) \hspace{1cm}  \text{for all   }  u,v \in \mathcal{P}_{a,d}%
    \end{equation*}
 if and only if
 
 \begin{enumerate}
     \item  $d \equiv 0 \mod 4$, $f(2)=2$ and  there exists a Dirichlet character $\chi_{d/2}$ modulo $d/2$ such that
\begin{equation}\label{def f d=0 a not =a+d}
        f(n) = n\chi_{d/2}(n) \qquad  \text{ if  } f(a) \neq f(a+d) 
\end{equation}
   \textbf{ or}
   \begin{equation}\label{def f d=0 a=a+d}
         f(n) = \chi_{d/2}(n) \qquad \text{ if  } f(a) = f(a+d) 
   \end{equation}

 whenever $(n,d)=1$;

    \item $d\equiv 2 \mod 4$, and there exists a Dirichlet character $\chi_{d/2}$ modulo $d/2$ such that
    
\begin{align}\label{def f d=2 a not=a+d}
    f(n) &= n\chi_{d/2}(n) \qquad\text{when   } (n,d) =1\\
    \text{and} \hspace{.5cm} 
   \label{2 gamma} f(2^{\gamma}) &= 2^{\gamma}\chi_{d/2}(2^{\gamma-1}) \hspace{1.5cm} \forall \gamma\geq 1
\end{align}
if $f(a) \neq f(a+d)$,

\textbf{or} 

\begin{align}\label{def f d=2 a=a+d}
    f(n) &= \chi_{d/2}(n) \hspace{1.5cm} \text{when   } (n,d) =1\\
    \text{and}\hspace{.5cm}
    \label{2 gamma a=a+d} f(2^{\gamma}) &= 2\chi_{d/2}(2^{\gamma-1}) \qquad \forall \gamma\geq 1 
\end{align}
if $f(a) = f(a+d)$.
 \end{enumerate}

\end{thm}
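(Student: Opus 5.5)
The plan is to reduce everything to the single identity obtained by putting $u=v$, and then to the halved progression $a+me$, where $e=d/2$. Since $d$ is even and $(a,d)=1$, $a$ is odd, so every $u\in\mathcal{P}_{a,d}$ is odd. Taking $u=v=a$ gives $f(2a)=2f(a)$. As $(2,a)=1$, this reads $f(2)f(a)=2f(a)$, and $f(a)\neq 0$ forces $f(2)=2$.

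Now write $u=a+kd$, $v=a+ld$ and $e=d/2$. Then $u+v=2(a+(k+l)e)$, so the additivity hypothesis becomes
\begin{equation*}
f\bigl(2(a+me)\bigr)=f(a+2ke)+f(a+2le)\qquad (k+l=m).
\end{equation*}

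\textbf{Case $d\equiv 0 \bmod 4$.} Here $e$ is even, so $a+me$ is odd and the left side equals $2f(a+me)$. Put $g(m)=f(a+me)$ and $h(k)=g(2k)$. Then $h(k)+h(l)$ depends only on $k+l$, which forces $h$ to be affine, $h(k)=A+Bk$. It follows that $g(m)=A+Bm/2$ for every $m$. Hence there are constants $\alpha,\beta$ with
\begin{equation*}
f(n)=\alpha+\beta n\qquad\text{for all } n\equiv a \bmod e .
\end{equation*}
Note that $f(a)=f(a+d)$ exactly when $\beta=0$.

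Next I use multiplicativity to show $\alpha\beta=0$. Take $n,n'\equiv a\bmod e$ that are coprime and have product $nn'\equiv a\bmod e$; such pairs exist by Dirichlet's theorem, or elementarily by CRT when $a^2\equiv a$ fails, in which case I would pick $n'$ in a suitable class instead. Comparing $f(nn')=f(n)f(n')$ with the affine formula and letting $n,n'$ vary should force either $\alpha=0$ or $\beta=0$. This step is where I expect the main bookkeeping: the progression $a\bmod e$ is not closed under multiplication. I plan to handle it by comparing several pairs $(n,m)$ with $nm$ landing in the class of $a$.

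Then I extend to all $n$ coprime to $d$. Fix a class $r\in(\mathbb{Z}/e)^*$ and take $n,n'\equiv r$. Choose $m\equiv ar^{-1}\bmod e$ with $(m,nn')=1$. Then $f(n)f(m)=\beta nm$ (resp.\ $=\alpha$), and the right side is nonzero, so $f(m)\neq 0$. Dividing shows that $f(n)/n$ (resp.\ $f(n)$) depends only on $r$. Using coprime representatives of two classes shows this function $c(r)$ is multiplicative on $(\mathbb{Z}/e)^*$. Since $f(1)=1$ we get $c(1)=1$, so $c=\chi_{d/2}$ is a Dirichlet character. This gives \eqref{def f d=0 a not =a+d} or \eqref{def f d=0 a=a+d}, according to whether $\beta\neq0$ or $\beta=0$.

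\textbf{Case $d\equiv 2\bmod 4$.} Here $e$ is odd, so $a+me$ is odd exactly when $m$ is even. Restricting to even $m$, the argument above runs on the class $a\bmod d$. Since $e$ is odd, the odd integers coprime to $e$ correspond by CRT to the classes modulo $e$. So the same procedure yields $f(n)=n\chi_{d/2}(n)$ or $f(n)=\chi_{d/2}(n)$ for every $n$ coprime to $d$.

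For odd $m$, write $a+me=2^{\gamma}t$ with $t$ odd and $\gamma\ge 1$. Then
\begin{equation*}
f(2^{\gamma+1})\,f(t)=f(u)+f(v).
\end{equation*}
The right side is known from the first step, and $f(t)$ is known since $t$ is coprime to $d$. Moreover $2^{\gamma+1}t\equiv 2a\bmod e$ gives $\chi(t)=\chi(2a)\overline{\chi}(2^{\gamma+1})$. Solving then yields $f(2^{\gamma+1})=2^{\gamma+1}\chi(2^{\gamma})$ in the first case and $2\chi(2^{\gamma})$ in the second, which are \eqref{2 gamma} and \eqref{2 gamma a=a+d}. Every $\gamma\ge1$ occurs because $e$ is odd, so $a+me$ can be made to have any prescribed $2$-adic valuation. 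The value $f(2)=2$ from the first step covers $\gamma=0$.

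\textbf{Converse.} This is a direct verification. For $u,v\equiv a\bmod d$, write $u+v$ as $2$ times an odd number when $4\mid d$, or as $2^{\gamma+1}t$ when $d\equiv 2\bmod 4$. Then apply the stated formulas together with $\chi(u)=\chi(v)=\chi(a)$ and $u+v\equiv 2a\bmod e$.
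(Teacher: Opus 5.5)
Your overall architecture works, but one step is not established as written. That step is $\alpha\beta=0$, and your extension step depends on it, since you use $f(nm)=\beta nm$ there.

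The pairs you ask for do not exist in general. If $n,n'\equiv a\pmod e$ then $nn'\equiv a^{2}\pmod e$, so $nn'\equiv a$ forces $a\equiv 1\pmod e$; Dirichlet's theorem cannot change that. Moving $n'$ to another class brings in a value $f(n')$ about which nothing is known at that stage, and you do not say how it gets eliminated.

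The missing idea is to pair the class of $a$ with the class of $1$, keeping the unknown factor fixed while the known one varies. Fix $m\equiv 1\pmod d$ with $m>1$. For each of the infinitely many $n\equiv a\pmod d$ coprime to $m$, we have $nm\equiv a\pmod d$, so the affine law gives
\[
(\alpha+\beta n)\,f(m)=\alpha+\beta nm .
\]
This is a linear identity in $n$ holding for infinitely many $n$, so it yields $\beta f(m)=\beta m$ and $\alpha f(m)=\alpha$. Hence if $\beta\neq 0$ then $f(m)=m\neq 1$, and so $\alpha=0$. The repair works in both of your cases, since the affine law holds at least on the class of $a$ modulo $d$. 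This is exactly the mechanism of the paper's Lemma~\ref{imp cor}: it multiplies $1+kd$ by $a+k_1d$ and by $a+(k_1+c)d$ and subtracts, which gives $f(1+kd)=1+kd$. Substituting back then gives $\alpha=0$.

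With this fixed, the rest of your argument is correct and is organized differently from the paper. The paper first normalizes to $f(1+kd)=1+kd$ and $f(2+kd)=2+kd$ (resp.\ $=1$, $=2$). It then uses Dirichlet's theorem to show that $f(n)/n$ is a $\phi(d)$-th root of unity, proves periodicity, and gets complete multiplicativity by splitting $p^{j}$ into primes $q_i\equiv p$.

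You instead define $c(r)$ directly on $(\mathbb{Z}/e\mathbb{Z})^{*}$ by multiplying the class $r$ against the class $ar^{-1}$. Multiplicativity of $c$ on the group then follows from one pair of coprime representatives, so a character appears at once and only the Chinese remainder theorem is needed. Two further differences:
\begin{itemize}
\item For $4\mid d$ you obtain the modulus $d/2$ by extending the affine law to the whole class $a \bmod e$ via $f(2)=2$; the paper goes through $f(2+kd)=2+kd$.
\item For $d\equiv 2\pmod 4$ you read off $f(2^{\gamma})$ from the additivity at odd $m$ with prescribed $2$-adic valuation; the paper multiplies $2^{\gamma}$ by a prime $p$ with $2^{\gamma-1}p\equiv 1\pmod{d/2}$.
\end{itemize}
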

As a direct corollary of Theorems \ref{thm 1} and \ref{thm even} we get
\begin{corollary}

     Let $f$ be a multiplicative function and suppose there exist two arithmetic progressions $\mathcal{P}_{a,d_1}$ and  $\mathcal{P}_{b,d_2}$, where $(d_1, d_2)=1$ and such that
      \begin{equation*}
    f(u_i+v_i)=f(u_i)+f(v_i) \hspace{1cm}  \text{for all   }  u_i,v_i \in \mathcal{P}_{a,d_i}
    \end{equation*}
for $i=1,2$, then $f$ is the identity function $f(n)=n$.

    \end{corollary}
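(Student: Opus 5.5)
The plan is to apply Theorem \ref{thm 1} or Theorem \ref{thm even} to each progression separately, compare the two resulting formulas on integers coprime to $d_1d_2$, and then use the coprimality of $d_1$ and $d_2$ to pin down $f$ at every prime power.

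First, the setup. Write $\mathcal{P}_{a,d_1}$ and $\mathcal{P}_{b,d_2}$ for the two progressions. I take the hypotheses implicit in the theorems: $a,b,d_1,d_2>1$, $(a,d_1)=(b,d_2)=1$, and $f(a)\neq 0\neq f(b)$. The corollary as stated does not include the nonvanishing condition, so it must be assumed or handled separately. Since $(d_1,d_2)=1$, at most one of $d_1,d_2$ is even. Say $d_2$ is odd.

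\textbf{Step 1: the odd modulus.} Theorem \ref{thm 1} applied to $\mathcal{P}_{b,d_2}$ gives a character $\chi_2$ modulo $d_2$ with $\chi_2(2)=1$ and
\[
f(n)=n\chi_2(n) \quad \text{for } (n,d_2)=1 .
\]

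\textbf{Step 2: the other modulus.} For $\mathcal{P}_{a,d_1}$ we use Theorem \ref{thm 1} if $d_1$ is odd, and Theorem \ref{thm even} if $d_1$ is even. Either way there is a character $\chi_1$, modulo $d_1$ or $d_1/2$, such that for $(n,d_1)=1$ either $f(n)=n\chi_1(n)$ or $f(n)=\chi_1(n)$.

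To rule out the second form, pick a prime $n>1$ not dividing $d_1d_2$. Then $|f(n)|=n$ from Step 1, while the second form would give $|f(n)|=|\chi_1(n)|=1$, a contradiction. So $f(n)=n\chi_1(n)$ whenever $(n,d_1)=1$.

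\textbf{Step 3: both characters are trivial.} For $(n,d_1d_2)=1$ the two formulas give $\chi_1(n)=\chi_2(n)$. Let $d_1'$ be the modulus of $\chi_1$; it is coprime to $d_2$. Given any residue $r$ with $(r,d_1')=1$, the Chinese remainder theorem gives an integer $n$ with
\[
n\equiv r \pmod{d_1'},\qquad n\equiv 1 \pmod{d_2},\qquad (n,d_1d_2)=1 .
\]
Then $\chi_1(r)=\chi_1(n)=\chi_2(n)=\chi_2(1)=1$, so $\chi_1$ is trivial. The symmetric argument shows $\chi_2$ is trivial.

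\textbf{Step 4: every prime power.} Let $p^k$ be a prime power. Since $(d_1,d_2)=1$, $p$ divides at most one of $d_1,d_2$.
\begin{itemize}
\item If $p\nmid d_2$, Step 1 gives $f(p^k)=p^k$.
\item Otherwise $p\mid d_2$, so $p$ is odd and $p\nmid d_1$, and Step 2 gives $f(p^k)=p^k$.
\end{itemize}
The prime $2$ needs a separate check, because it may divide $d_1$. Since $d_2$ is odd, $2\nmid d_2$, so $f(2^\gamma)=2^\gamma\chi_2(2^\gamma)=2^\gamma$ follows directly from Step 1. This is consistent with \eqref{2 gamma} when $\chi_{d/2}$ is trivial. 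Multiplicativity of $f$ then gives $f(n)=n$ for all $n$.

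The main obstacle is Step 2: excluding the $f(n)=\chi_{d/2}(n)$ alternatives of Theorem \ref{thm even}. This works only because the odd progression always forces $|f(n)|=n$ and there is some $n>1$ coprime to both moduli. The other point to handle is the implicit assumption $f(a),f(b)\neq 0$, which the corollary inherits from the two theorems.
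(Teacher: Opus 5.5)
Your proof is correct and is exactly the argument the paper intends when it calls this statement a direct corollary of Theorems~\ref{thm 1} and~\ref{thm even}; the paper gives no further proof. You apply the relevant theorem to each progression, use the odd modulus to rule out the $f(n)=\chi_{d/2}(n)$ alternative, force both characters to be trivial via the Chinese remainder theorem, and handle each prime power through a modulus it does not divide.

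You are right that $f(a)\neq 0\neq f(b)$ (and $a,b,d_1,d_2>1$, $(a,d_1)=(b,d_2)=1$) must be inherited from the theorems, since the printed corollary omits them. One detail to add: when $d_1\equiv 2 \pmod 4$, the CRT choices in Step 3 (including the symmetric one) must also impose $n$ odd. This is possible because $2$, $d_1/2$ and $d_2$ are pairwise coprime.
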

\section{Playing the game } 

\subsection{Proving that $f$ has the additive property }
We first show that if a function $f$ is defined as in Theorem \ref{thm 1} and Theorem \ref{thm even}, then the additivity condition (\ref{split add}) holds.\\

 Let $u=a+k_1d$ and $v=a+k_2d$ be in $\mathcal{P}_{a,d}$ so that  $n= u+v =2a+kd$ for $k_1+k_2=k$. Let also $\chi$ be a Dirichlet character, where $\chi = \chi_d$ if $d$ is odd and $\chi =\chi_{d/2}$ if $d$ is even. 
 If $f(a)\neq f(a+d)$, then since  by hypothesis we have $(a,d)=1$,  we can infer that for any $k\in \mathbb{Z}$, $(a+kd,d)=1$, so that for any $u$ and $v\in\mathcal{P}_{a,d}$
\begin{align*}
    f(u)+f(v) &= f(a+k_1d) + f(a+k_2d)\\
    & = (a+k_1d)\chi(a+k_1d)+ (a+k_2d)\chi(a+k_2d)\\
    & = \chi(a)(a+k_1d + a+k_2d) = (2a+kd)\chi(a).
\end{align*}
 Hence, we need to show that $f(2a+kd)=(2a+kd)\chi(a)$.\\

  First, consider the case where $d$ is an odd integer and let $f$ be defined by equation \ref{def f odd}.  As $d$ is odd, we also have  $(2a+kd,d)=1$, so that $f(n)=n\chi(n)$ and $\chi(2)=1$ hold by assumption. That is, 
    \begin{align*}
      f(2a+kd)&=(2a+kd)\chi(2a+kd)\\
      &= (2a+kd)\chi(2a)\\
      &= (2a+kd)\chi(2)\chi(a)\\
      &=(2a+kd)\chi(a),
    \end{align*}
    as desired and this concludes the proof of the first direction of Theorem \ref{thm 1} .

  Now suppose that $d$ is even and $f(a)\neq f(a+d)$. We split into two cases. First suppose that $d\equiv 0 \mod 4$, then again, since $(a,d)=1$, we have $(a+kd/2, d) =1$ and also $(2, a+kd/2)=1$, therefore by  (\ref{def f d=0 a not =a+d}),
  \begin{align*}
      f(2a+kd) &= f(2)f(a+kd/2)\\
      &= 2(a+kd/2)\chi(a+kd/2)\\
      & = (2a+kd)\chi(a),
  \end{align*}
  as desired.

  On the other hand, suppose that $d\equiv 2 \mod 4$, and write $2a+kd = 2^{\gamma}c$, where $c$ is an odd integer. Then by the definitions (\ref{def f d=2 a not=a+d}) and (\ref{2 gamma}),
  \begin{align*}
      f(2a+kd) &=f(2^{\gamma}c)\\
      &= f(2^{\gamma})f(c)\\
      & = 2^{\gamma}\chi(2^{\gamma-1})c\chi(c)\\
      &= (2a+kd)\chi(a+kd/2)\\
      &= (2a+kd)\chi(a),
  \end{align*}
  and this finishes the cases $f(a)\neq f(a+d)$.

To end the proof of the first direction of Theorem \ref{thm even}, we are left to prove the case when $f(a)=f(a+d)$ and $d$ is even.
That is, suppose that $f(a) = f(a+d)$, so that by (\ref{def f d=0 a=a+d}) and (\ref{def f d=2 a=a+d}), $f(n) = \chi(n)$ whenever $(n,d) = 1$. Then, since $(a+kd, d) = 1$, we have that

 \begin{align*}
     f(u) + f(v) &= \chi(a+k_1d) + \chi(a+k_2d)\\
     & = \chi(a) + \chi(a) \\
     &= 2\chi(a)
 \end{align*}
by the periodicity of characters, and we therefore have to show that \[f(2a+kd) = 2\chi(a).\]

\noindent In case $d\equiv 2 \mod 4$, write $2a+kd=2^{\gamma}c$, where $c$ is odd, then by (\ref{def f d=2 a=a+d}) and (\ref{2 gamma a=a+d}),
\begin{align*}
    f(2a+kd) &= f(2^{\gamma})f(c)\\
    & =2\chi(2^{\gamma-1})\chi(c)\\
    & =2\chi(2^{\gamma-1}c)\\
    & = 2\chi(a+kd/2)\\
    &= 2\chi(a),
\end{align*}
as needed.
Finally, if $d\equiv 0 \mod 4$, then $d/2$ is even and $a$ must be odd, so that
\begin{align*}
    f(2a+kd) &= f(2)f\left(a + kd/2\right)\\
    &= 2\chi\left(a + kd/2\right)\\
    & =2\chi(a).\\
\end{align*}
This ends the proof of the first direction of Theorems \ref{thm 1} and \ref{thm even}.
   
\subsection{Proving that $f$ is almost the identity}
We now show that if a multiplicative function respects the multiplicative and additive conditions stated in Theorem \ref{thm 1}, then it must be the identity function twisted with a Dirichlet character for all integers $n$ coprime to $d$.

For convenience, we present the necessary assumptions for the proof of the theorems.

\begin{hyp}\label{hyp}
    Given $a>1$ and $d>1$ such that $(a,d)=1$, we let f be a multiplicative function such that $f(a) \neq 0$ and

    \begin{equation*}
        f(u+v)=f(u)+f(v) \hspace{1cm} \forall u,v \in \mathcal{P}_{a,d}
    \end{equation*}
or in the more practical equivalent form

\begin{equation*}\label{split}
    f(2a +kd)= f(a+k_1d)+f(a+k_2d),
\end{equation*}
 for any $k\in \mathbb{Z}$ and any $k_1+k_2=k$.\\ 
 \end{hyp}

The key component of the proof will be to show that $f(1+kd)=1+kd$ (or $f(1+kd)=1$ when f(a) = f(a+d)) for any integer $k$ which will be obtained as Lemma \ref{imp cor} from the following lemma.

    \begin{lemma}\label{lem k factors}
        Under Hypothesis \ref{hyp}, for all $k\in \mathbb{Z}$, we have
        \begin{equation}\label{lemma 1.1}
            f(a+kd)=kf(a+d)-(k-1)f(a)     \\
        \end{equation}
        and
            \begin{equation} \label{lemma 1.2}
                            f(2a+kd)=kf(a+d)-(k-2)f(a). 
            \end{equation}
       
        \begin{proof} 
        We prove the first equation by a simple induction argument on $k$, the case $k=0$ and $k=1$ being trivial. So suppose that $k\geq 1$ and that (\ref{lemma 1.1}) holds for $k-1$. Then on one hand we have  
    \begin{align*}
            f(2a+kd)&=f(a)+f(a+kd),
        \end{align*}
     while on the other hand, we have
          \begin{align*}
            f(2a+kd)&=f(a+d)+f(a+(k-1)d)\\
            &=kf(a+d)-(k-2)f(a),
        \end{align*}
        by induction hypothesis. Putting both together, we get
    \begin{equation*}
            f(a) + f(a+kd)=kf(a+d)-(k-2)f(a)
        \end{equation*}
  from which we obtain 
    \begin{equation*}
           f(a+kd)= kf(a+d)-(k-1)f(a)
        \end{equation*}
        as desired.\\

        Now consider the case of $f(a-kd)$ with $k\geq 0$, then by the same argument we get that 
 \begin{equation*}
           f(a-kd)= kf(a-d)-(k-1)f(a).
        \end{equation*}
Observing that
\begin{align*}
    f(2a+d) &= f(a)+f(a+d)\\
    f(2a+d) &= f(a-d)+f(a+2d),
\end{align*}
we get that 
\begin{align*}
    f(a-d)& = f(a)+f(a+d) - f(a+2d)\\
    &=f(a)+f(a+d) - (2f(a+d) - f(a))
\end{align*}
        using the result we just showed. Now we can do a simple substitution\\ $f(a-d) = 2f(a) -f(a+d)$ from which we obtain that 
        \[f(a-kd) = -kf(a+d) - (-k-1)f(a),\]
       and we can conclude that for all $k\in \mathbb{Z}$
        \[ f(a+kd)= kf(a+d)-(k-1)f(a).\]
        
         The second equation is easily obtained from the first one by adding $f(a)$ on both sides, giving
         \begin{align*}
             f(a)+ f(a+kd)&= kf(a+d)-(k-1)f(a)+ f(a)\\
             f(2a+kd)&= kf(a+d) - (k-2)f(a).
         \end{align*}
        \end{proof}
    \end{lemma}
\begin{corollary}
    Under the same conditions, $f(a+d) \neq 0$ and there exists at most one integer $k_0$ such that $f(a+k_0d)=0$.
    \begin{proof}
        Suppose $f(a+d) = 0$, then, from Lemma \ref{lem k factors} for any $k\geq 1$, we get $f(a+kd) = -(k-1)f(a)$. Choose $k\geq 1$ such that $(a+d, 1+kd)=1$. Then by multiplicativity of $f$ and Lemma \ref{lem k factors},
        \begin{align*}
            f((a+d)(1+kd)) &= f(a + (1+k(a+d))d)\\
            f(a+d)f(1+kd) &= -k(a+d)f(a)\\
            0 &= -k(a+d)f(a),
        \end{align*}
 and therefore we get $f(a) = 0$ which contradicts the assumptions.\\
 
\noindent Moreover, if $f(a+k_0d)=0$ for some positive integer $k_0$, then 
 \[ 0 = k_0f(a+d) - (k_0-1)f(a)
 \]
 so that
 \[f(a+d) = \frac{k_0-1}{k_0}f(a).\]
Hence we get that for any $k\geq1$ 
\begin{align*}
    f(a+kd) &= kf(a+d)-(k-1)f(a)\\
    & = k\left(\frac{k_0-1}{k_0}\right)f(a) -(k-1)f(a)\\
    & = \left(1-\frac{k}{k_0}\right)f(a),
\end{align*}
which is zero only when $k=k_0$.
    \end{proof}
\end{corollary}

    \begin{lemma}\label{imp cor}
        Under Hypothesis \ref{hyp}, if $f(a)\neq f(a+d)$, then for all $k\in \mathbb{Z},$
        \begin{equation*}
                    f(1+kd)=1+kd
        \end{equation*}
        and
        \begin{equation*}
                    f(2+kd)=2+kd.
        \end{equation*}
        \begin{proof}
           Given $k \in \mathbb{Z}$, let $k_1$ and $c\neq 0$ be integers such that $(a+k_1d, 1+kd) = 1$ and $(a+(k_1+c)d, 1+kd) = 1$. Consider
           \[f\big((a+k_1d)(1+kd)\big) = f(a+(ak+k_1+kk_1d)d),\]
           then using multiplicativity and Lemma \ref{lem k factors}, we have
           \[(k_1f(a+d) - (k_1-1)f(a))f(1+kd) = (ak+k_1+kk_1d)f(a+d) - (ak+k_1+kk_1d-1)f(a).\]
           Similarly
            \[f\Big((a+(k_1+c)d)(1+kd)\Big) = f\Big(a+\big(ak+k_1+c+k(k_1+c)d\big)d\Big)\]
            and
           \small\[((k_1+c)f(a+d) - (k_1+c-1)f(a))f(1+kd) = (ak+k_1+c+k(k_1+c)d)f(a+d) - (ak+k_1+c+k(k_1+c)d-1)f(a),\]
           so that subtracting both equations, we get
           \[f(1+kd)c(f(a+d)-f(a)) = c(1+kd)(f(a+d) - f(a)),\]
           which implies, as $f(a)\neq f(a+d)$, that 
           \[f(1+kd) = 1+kd\]
           as desired. The proof of the second result is completely analogous.
        \end{proof}
    \end{lemma}

With this key ingredient in hand, we can now establish the result.
    
    \begin{prop}\label{main prop}
        Given Hypothesis \ref{hyp}  and with the assumption that $f(a)\neq f(a+d)$, then there exists a character $\chi$ such that for all n coprime to d 
        \begin{equation*}
            f(n)=n\chi(n),
        \end{equation*}
        where the modulo for $\chi$ is $d$ if $d$ is odd, and $d/2$ if $d$ is even.

        \begin{proof}
           First let $d$ be odd and let $n$ be an integer with $(n,d) = 1$, then there exists $s$ such that $ns \equiv 1 \pmod d$. 

          Using Dirichlet's theorem, we now choose any sequence of primes in the arithmetic progression of difference $d$ of $s$,
           \[p_1\equiv p_2 \equiv \cdots \equiv p_{\phi(d)} \equiv s \mod d,\]
           so that $(n,p_i)=1$ and $np_i \equiv 1 \pmod d$ for all $1\leq i\leq \phi(d)$. That is, there exists $k_i\in \mathbb{Z}$ such that $np_i = 1+k_id$ and therefore

        \begin{align*}
            f(np_i) &= f(1+k_id)\\
        f(n)f(p_i) &= 1+k_id\\
        & = np_i
        \end{align*}

        Let $f(n)= c \in \mathbb{C}$, then for all the primes in the sequence we have

        \[f(p_i) = \frac{n p_i}{c}.\]

        On the other hand, $p_1p_2\cdots p_{\phi(d)} \equiv 1 \pmod d$ and therefore for some $\Tilde{k}$
        \begin{align*}
            f(p_1p_2\cdots p_{\phi(d)})& = f(1+\Tilde{k}d)\\
            f(p_1)f(p_2)\cdot f(p_{\phi(d)}) &= 1+\Tilde{k}d \\
            &= p_1p_2\cdots p_{\phi(d)}\\
            \frac{n p_1}{c}\cdots \frac{n p_{\phi(d)}}{c} & = p_1p_2\cdots p_{\phi(d)}\\
             \left(\frac{n}{c}\right)^{\phi(d)}p_1p_2\cdots p_{\phi(d)}  & = p_1p_2\cdots p_{\phi(d)},
        \end{align*}
 which allows us to conclude that $\dfrac{n}{c}$ is a $\phi(d)$-th root of unity $\zeta$ and $f(n)= n\overline{\zeta}$.\\

Now, write $f(n) = n g(n)$ where $g$ takes values in the $\phi(d)$-th roots of unity. Then by the nature of $f$, $g$ is multiplicative and we show that $g$ is periodic with period $d$. Suppose that $m\equiv n \mod d$, then find $p$ such that $pn\equiv pm \equiv 1$  and $(n,p) = (m,p)=1$. Then

\begin{align*}
    f(np) &= f(1+kd)\\
   f(n)f(p) & = 1+kd\\
  ng(n)pg(p) & = np \\
 \implies g(n)g(p)& =1 .
\end{align*}
Similarly,
\begin{align*}
    f(mp) &= mp  \\
  mg(m)pg(p) & = mp \\
 \implies g(m)g(p)& =1, 
\end{align*}
from which we deduce that $g(m) = g(n)$ and that $g$ is periodic modulo $d$.\\

Note that this implies that $g$ is completely multiplicative. Indeed, let $p^j$ be any prime power with $(p,d)=1$ and take a sequence of primes $q_1 \equiv  q_2\equiv \cdots q_j \equiv p \mod d$, so that $q_1q_2\cdots q_j \equiv p^j$.
         Since for any $i\leq j$, 
         \[g(q_i) = g(p),\]
         then
         \begin{align*}
             g(q_1q_2\cdots q_j )&= g (p^j)\\
             g(q_1)g(q_2)\cdots g(q_j) &= g(p^j)\\
             g(p)^j &= g(p^j),
         \end{align*}
         and we get that $g$ is completely multiplicative on integers coprime to $d$. Therefore, if $(n,d)=1$, $g(n)= \chi_d(n)$ for some Dirichlet character modulo $d$ and that 
         \[f(n)= n\chi_d(n).\]

         Now, if $d$ is even, we proceed the same way but taking $ns \equiv 1 \mod d/2$ so that $2ns = 2+kd$ and $f(2np_i) = f(2+kd)=2+kd$. The argument is completely analogous to the odd case and we get
         \[f(n)= n\chi_{d/2}(n).\]
   
        \end{proof}
          \end{prop}

        \begin{corollary}\label{cor 2 gamma}
            Under the Hypothesis \ref{split add} and the condition $f(a+d) \neq f(a)$, if $d$ is odd then 
            \[f(2^{\gamma})=2^{\gamma},\]
            and if $d \equiv 2 \mod 4$, then 
            \[f(2^{\gamma})= 2^{\gamma}\chi_{d/2}(2^{\gamma-1}).\]
            \noindent If $d\equiv 0 \mod 4$, we only have $f(2)=2$. 
            \begin{proof}
                Let $d$ be odd, then $(2,d)=1$ and by Proposition \ref{main prop}, 
                \[f(2) = 2\chi(2).\]
                On the other hand, from Lemma \ref{imp cor} we know that $f(2)=2$, so that if $d$ is odd $\chi(2)=1$ and the result follows from complete multiplicativity of characters.\\

                \noindent Now let $d\equiv 2 \mod 4$, which\ means that $d/2$ is odd and $(2^{\gamma-1} , d/2)=1$. That is, there exists a prime $p\in \mathbb{Z}$ such that $ 2^{\gamma-1}p\equiv 1 \mod d/2$ and so we get  $2^{\gamma}p = 2 + kd$ for some $k \in \mathbb{Z}$. Hence we get
                \begin{align*}
                    f(2^{\gamma})f(p) &= f(2 + kd)\\
                    f(2^{\gamma})p\chi(p) &= 2+kd =2^{\gamma}p\\
                    \implies f(2^{\gamma}) &= 2^{\gamma}\chi^{-1}(p)\\
                    & = 2^{\gamma}\chi(2^{\gamma-1}).
                \end{align*}
                \end{proof}
        \end{corollary}

We now show that if $d$ is odd, then under Hypothesis \ref{hyp} $f(a)\neq f(a+d)$, which will ensure that Proposition $\ref{main prop}$ holds and conclude the proof of Theorem \ref{thm 1}. To do so, we first need to investigate the consequences of the condition $f(a) = f(a+d)$.

\begin{lemma}\label{4 cond}
    Under Hypothesis \ref{hyp}, if  $f(a)=  f(a+d)$, then for all $k\geq 0$,

\begin{enumerate}
    \item $f(a+kd) = f(a)$,
    \item $f(2a+kd) = 2f(a)$
    \item $f(1+kd) =1 $ 
    \item $f(2+kd) = 2$.
\end{enumerate}
    \begin{proof}
        The first and second results follow directly from Lemma \ref{lem k factors} and as a consequence, for any $k\geq 0$ taking $k_1$ such that $(a+ k_1d, 1+kd)=1$, we get that
        \begin{align*}
          f((a+k_1d)(1+kd)) &= f(a+\Tilde{k}d)\\
        f(a+k_1d)f(1+kd) & = f(a+\Tilde{k}d)\\
        f(a)f(1+kd) = f(a)
        \end{align*}
        from which it follows that $f(1+kd)=1$. The proof of the last equality is completely analogous.
    \end{proof}
\end{lemma}

\begin{corollary}
    Under Hypothesis \ref{split add}, if $d$ is odd, then $f(a) \neq f(a+d)$.
    \begin{proof}
        Suppose for contradiction that $f(a) = f(a+d)$. Then for all $k\geq 0$, $f(1+kd)=1$. That is, solving for $k$

        \[1+kd\equiv 2 \mod 4,\]
        there exists an integer $m$ such that  
        \begin{equation}
            1+kd = 2+4m = 2(1+2m). \label{4m}
        \end{equation}
Applying the function on both sides, we get

\begin{align*}
    f(1+kd) &= f(2)f(1+2m),\\
    1 &= 2f(1+2m),
\end{align*}
so that we deduce that $f(1+2m)=\frac{1}{2}$. Now multiplying equation (\ref{4m}) by 2 and using $f$ again, we get
\begin{align*}
    f(2+2kd) &= f(4)f(1+2m),\\
    2 &= f(4)\frac{1}{2},
\end{align*}
And therefore we deduce that $f(4) = 4.$
Now, proceeding by induction, we repeat the same argument by replacing 2 with $2^m$ and 4 with $2^{m+1}$ which leads to 
\[f(2^m) = 2^m.\]
However, there exists $1\leq \gamma\leq d-1$ such that $2^{\gamma} \equiv 1 \mod d$ and so for some $\gamma > 1$,

\[
    f(2^{\gamma}) = f(1+kd),
\]
from which we would get that $2^{\gamma} = 1$, which is obviously not true. We can therefore conclude that if $d$ is odd, $f(a) \neq f(a+d)$. 
    \end{proof}
\end{corollary}

This concludes the proof of Theorem \ref{thm 1}, and part of the proof of Theorem \ref{thm even} in the case $f(a)\neq f(a+d)$ and $(n,d)=1$. It remains to prove the cases when $d$ is even and $f(a)=f(a+d)$.

\begin{prop}
    Let $d$ be even, then under Hypothesis \ref{hyp} and the assumption that $f(a) = f(a+d)$, there exists a Dirichlet character modulo $d/2$ such that
    \[f(n) = \chi_{d/2}(n) \qquad \text{ if } (n,d)=1.\]
    \begin{proof}
       The proof follows the lines of the proof of Proposition \ref{main prop}, with the key difference that we use Lemma \ref{4 cond}, namely the fact that for any $k\in \mathbb{Z}$, $f(1+kd) =1$ and $f(2+kd)=2$.

  Let $m$ and $n$ be integers such that $(n,d) = (m,d)=1$ and $m\equiv n \mod d/2$. Find an odd prime $p$ such that $p\equiv m^{-1} \mod d/2$ and $(p,m) = (p,n) =1$. Then we have  $pm \equiv pn \equiv 1 \mod d/2$, and therefore for some integers $k_1$ and $k_2$,

    \begin{align*}
        f(2pm) &= f(2+k_1d)\\
        f(2)f(p)f(m) &= 2,
    \end{align*}
and similarly   
    \begin{align*}
        f(2pn) &= f(2+k_2d)\\
        f(2)f(p)f(n) &= 2,
    \end{align*}
    from which we deduce that $f(n) = f(m)$ and we conclude that $f$ is periodic modulo $d/2$. As in the proof of Proposition \ref{main prop}, we can easily show that in this case $f$ is completely multiplicative on integers coprime to $d$ so that we indeed get that $f(n)$ agrees with a Dirichlet character modulo $d/2$ on all integers $n$ such that $(n,d)=1$.

\end{proof}
\end{prop}

\begin{corollary}
    Under the same conditions, if $d\equiv 2 \mod 4$, then 
    \[f(2^{\gamma}) = 2\chi_{d/2}(2^{\gamma}-1).\]
    \begin{proof}
        The proof is analogous to the one of Corollary \ref{cor 2 gamma}, we leave it to the reader to verify.
    \end{proof}
\end{corollary}

This ends the proof of Theorem \ref{thm even} and concludes the characterization of multiplicative functions respecting the given additive property on arithmetic progressions.

\bibliographystyle{plain}
\bibliography{biblio.bib}

\end{document}